\documentclass[10pt,reqno]{amsproc}

\usepackage{amssymb}
\usepackage{bbm}
\usepackage{amsthm}
\usepackage[bookmarks=false]{hyperref}
\usepackage{etoolbox}
\usepackage{array}
\usepackage{xparse}
\usepackage{enumitem}
\usepackage{tikz}
\usepackage{mathtools}
\usepackage[font=scriptsize]{caption}

\DeclareMathOperator{\cnx}{div}
\DeclareMathOperator{\cn}{div}

\DeclareMathOperator{\dif}{d}

\def\ba{\begin{align}}
\def\bad{\begin{aligned}}
\def\be{\begin{equation}}
\def\ea{\end{align}}
\def\ead{\end{aligned}}
\def\ee{\end{equation}}
\def\e{\eqref}

\def\dsigma{\dif \! \sigma}
\def\dx{\dif \! x}
\def\dalpha{\dif \! \alpha}

\def\dydx{\dif \! y \dif \! x}

\def\defn{\mathrel{:=}}

\def\eps{\varepsilon}
\def\la{\left\vert}
\def\lA{\left\Vert}
\def\le{\leq}

\def\mez{\frac{1}{2}}

\def\ra{\right\vert}
\def\rA{\right\Vert}

\def\xN{\mathbf{N}}
\def\xR{\mathbf{R}}

\def\xT{\mathbf{T}}

\hfuzz=3pt 



\newtheorem{thm}{Theorem}[section]
\newtheorem{theorem}[thm]{Theorem}

\newtheorem{prop}[thm]{Proposition}
\newtheorem{cor}[thm]{Corollary}

\theoremstyle{definition}

\theoremstyle{remark}
\newtheorem{rem}[thm]{Remark}
\newtheorem{rmk}[thm]{Remark}



\newcommand{\thmref}[1]{Theorem~\ref{#1}}

\newcommand{\propref}[1]{Proposition~\ref{#1}}


\newcommand*{\qq}{\qquad}

\newcommand*{\tx}[1]{\text{#1}}

\newcommand*{\suchthat}{\, \middle| \,}


\newcommand*{\myoverline}[3]{\mkern -#1mu\overline{\mkern#1mu#3\mkern#2mu}\mkern -#2mu}	

\newcommand*{\Pminusbar}{\myoverline{-3}{0}{P}_{-}}


\newcommand*{\half}{\frac{1}{2}}


\newcommand*{\Zsp}{\mathbb{Z}}

\newcommand*{\Pminus}{P_{-}}


\newcommand*{\Ltwo}{L^2}


\newcommand*{\al}{\alpha}


\newcommand*{\diff}{\mathop{}\! d}

\newcommand*{\Hil}{\mathbb{H}}

\newcommand*{\Imag}{\tx{Im}}
\newcommand*{\Real}{\tx{Re}}

\newcommand*{\grad}{\nabla}

\newcommand*{\Dabs}{\abs{D}}

\newcommand*{\pal}{\partial_\al}


\newcommand*{\Fcal}{\mathcal{F}}




\newcommand*{\zetatil}{\widetilde{\zeta}}


\newcommand*{\Z}{Z}

\newcommand*{\Zal}{\Z_{\al}}

\newcommand*{\Zalabs}{\abs{\Zal}}


\DeclarePairedDelimiter{\oldbrac}{\lparen}{\rparen}			
\NewDocumentCommand{\brac}{ s o m }{						
	\IfBooleanT{#1}{
  		\IfValueT{#2}{\oldbrac[#2]{#3}}
		\IfValueF{#2}{\oldbrac{#3}} 
	}
	\IfBooleanF{#1}{
  		\IfValueT{#2}{\PackageError{mypackage}{Incorrect use of brac. Insert star}{}}
		\IfValueF{#2}{\oldbrac*{#3}} 
	}		
}

\DeclarePairedDelimiter\oldcbrac{\lbrace}{\rbrace}				
\NewDocumentCommand{\cbrac}{ s o m }{					
	\IfBooleanT{#1}{
  		\IfValueT{#2}{\oldcbrac[#2]{#3}}
		\IfValueF{#2}{\oldcbrac{#3}} 
	}
	\IfBooleanF{#1}{
  		\IfValueT{#2}{\PackageError{mypackage}{Incorrect use of cbrac. Insert star}{}}
		\IfValueF{#2}{\oldcbrac*{#3}} 
	}		
}

\DeclarePairedDelimiter\oldsqbrac{\lbrack}{\rbrack}				
\NewDocumentCommand{\sqbrac}{ s o m }{					
	\IfBooleanT{#1}{
  		\IfValueT{#2}{\oldsqbrac[#2]{#3}}
		\IfValueF{#2}{\oldsqbrac{#3}} 
	}
	\IfBooleanF{#1}{
  		\IfValueT{#2}{\PackageError{mypackage}{Incorrect use of sqbrac. Insert star}{}}
		\IfValueF{#2}{\oldsqbrac*{#3}} 
	}		
}

\DeclarePairedDelimiter{\oldabs}{\lvert}{\rvert}
\NewDocumentCommand{\abs}{ s o m }{						
	\IfBooleanT{#1}{
  		\IfValueT{#2}{\oldabs[#2]{#3}}
		\IfValueF{#2}{\oldabs{#3}} 
	}
	\IfBooleanF{#1}{
  		\IfValueT{#2}{\PackageError{mypackage}{Incorrect use of abs. Insert star}{}}
		\IfValueF{#2}{\oldabs*{#3}} 
	}		
}

\DeclarePairedDelimiterX{\oldnorm}[1]{\lVert}{\rVert}{#1}
\NewDocumentCommand{\norm}{ s o o m }{					
	\IfValueT{#2} {
		\IfBooleanT{#1}{
  			\IfValueT{#3}{\oldnorm[#2]{#4}_{#3}}
			\IfValueF{#3}{\oldnorm{#4}_{#2}} 
		}
		\IfBooleanF{#1}{
  			\IfValueT{#3}{\PackageError{mypackage}{Incorrect use of norm. Insert star}{}}
			\IfValueF{#3}{\oldnorm*{#4}_{#2}} 
		}
	}
	\IfValueF{#2} {
		\IfBooleanT{#1}{\oldnorm{#4}}	
		\IfBooleanF{#1}{\oldnorm*{#4}}		
	}	
}


\makeatletter
\def\black@#1{%
    \noalign{%
        \ifdim#1>\displaywidth
            \dimen@\prevdepth
            \nointerlineskip
            \vskip-\ht\strutbox@
            \vskip-\dp\strutbox@
            \vbox{\noindent\hbox to \displaywidth{\hbox to#1{\strut@\hfill}}}%
            \prevdepth\dimen@
        \fi
    }%
}
\makeatother

\makeatletter
\setcounter{tocdepth}{2}
\renewcommand{\tocsection}[3]{%
  \indentlabel{\@ifnotempty{#2}{\bfseries\ignorespaces#1 #2\quad}}\bfseries#3}
\renewcommand{\tocsubsection}[3]{%
  \indentlabel{\@ifnotempty{#2}{\ignorespaces#1 #2\quad}}#3}

\newcommand\@dotsep{4.5}
\def\@tocline#1#2#3#4#5#6#7{\relax
  \ifnum #1>\c@tocdepth 
  \else
    \par \addpenalty\@secpenalty\addvspace{#2}%
    \begingroup \hyphenpenalty\@M
    \@ifempty{#4}{%
      \@tempdima\csname r@tocindent\number#1\endcsname\relax
    }{%
      \@tempdima#4\relax
    }%
    \parindent\z@ \leftskip#3\relax \advance\leftskip\@tempdima\relax
    \rightskip\@pnumwidth plus1em \parfillskip-\@pnumwidth
    #5\leavevmode\hskip-\@tempdima{#6}\nobreak
    \leaders\hbox{$\m@th\mkern \@dotsep mu\hbox{.}\mkern \@dotsep mu$}\hfill
    \nobreak
    \hbox to\@pnumwidth{\@tocpagenum{\ifnum#1=1\bfseries\fi#7}}\par
    \nobreak
    \endgroup
  \fi}
\AtBeginDocument{%
\expandafter\renewcommand\csname r@tocindent0\endcsname{0pt}
}
\def\l@subsection{\@tocline{2}{0pt}{2.5pc}{5pc}{}}
\makeatother

\makeatletter
 \def\@testdef #1#2#3{%
   \def\reserved@a{#3}\expandafter \ifx \csname #1@#2\endcsname
  \reserved@a  \else
 \typeout{^^Jlabel #2 changed:^^J%
 \meaning\reserved@a^^J%
 \expandafter\meaning\csname #1@#2\endcsname^^J}%
 \@tempswatrue \fi}
\makeatother

\makeatletter
\newcommand*{\rom}[1]{\expandafter\@slowromancap\romannumeral #1@}
\makeatother

\makeatletter
\patchcmd{\@sect}{\@addpunct.}{}{}{}
\patchcmd{\subsection}{-.5em}{1em}{}{}
\makeatother

\newcommand*{\Bcal}{\mathcal{B}}
\newcommand*{\Vcal}{\mathcal{V}}


\numberwithin{equation}{section}
\pagestyle{plain}

\begin{document}

\title[Rellich]{Refined Rellich boundary inequalities for the derivatives of a harmonic function}
\author{Siddhant Agrawal}
\author{Thomas Alazard}
\date{}

\begin{abstract}
The classical Rellich inequalities imply that the $L^2$-norms of the normal and tangential derivatives of a harmonic function are equivalent. In this note, we prove several refined inequalities, which make sense even if the domain is not Lipschitz. For two-dimensional domains, we obtain a sharp $L^p$-estimate for $1<p\leq 2$ by using a Riemann mapping and interpolation argument.
\end{abstract}

\subjclass[2010]{26D10, 35A23}

\maketitle

\section{Introduction}\label{S:G-Lip}

Let $d\ge 1$ and denote by $\xT^d$ a $d$-dimensional torus. 
Given two real valued functions $h\in W^{1,\infty}(\xT^d)$ and 
$\zeta\in H^{1/2}(\xT^d)$, it is classical that there exists a unique variational solution $\phi$ 
to the following problem
\begin{equation}\label{eq:phi}
\left\{
\begin{aligned}
&\Delta_{x,y}\phi=0\quad\text{in }\Omega=\{(x,y)\in \xT^d\times \xR \,;\, y<h(x)\},\\
&\phi(x,h(x)) = \zeta(x),\\
& \lim_{y\to-\infty}\sup_{x\in\xT^{d}}\la \nabla_{x,y}\phi(x,y)\ra=0.
\end{aligned}
\right.
\end{equation}
We are interested by quantitative estimates for the trace of 
the normal derivative $\partial_N\phi$ on the boundary~$\partial\Omega$, 
where the normal unit vector $N\in \xR^{d+1}$ is defined by 
\begin{align}\label{eq:normal}
N = \frac{1}{\sqrt{1+|\nabla h|^2}}\begin{pmatrix} -\nabla h\\ 1\end{pmatrix}.
\end{align}
By construction, the variational solution is such that $\nabla_{x,y}\phi \in L^2(\Omega)$, 
so it is not obvious that one can consider the trace $\partial_N \phi\arrowvert_{\partial\Omega}$. However, 
since $\Delta_{x,y}\phi=0$, one can express the normal 
derivative in terms of the tangential derivatives and prove that 
$\sqrt{1+|\nabla h|^2}\partial_N \phi\arrowvert_{\partial\Omega}$ 
is well-defined and belongs to $H^{-\mez}(\xT^d)$. 

In this paper, we are chiefly interested by another estimate, known as Rellich inequality, which plays a key role 
in the study of boundary value problems in Lipschitz domains. This inequality shows the equivalence between 
the $L^2$-norm of the tangential derivatives 
and the $L^2$-norm of the normal derivative (see~\cite{Dahlberg77,Verchota84,Dahlberg87,Gao91,Brown1994,CWGLS-2012,OttBrown2013}): there is constant $C>0$, depending only on $d$ and $\lA \nabla h\rA_{L^\infty}$ such that
\begin{equation}\label{Rellich-original}
\frac{1}{C}\int_{\partial\Omega}  (\partial_N\phi)^2 \dsigma
\le 
\int_{\xT^d}  \la\nabla \zeta\ra^2 \dx
\le C
\int_{\partial\Omega}  (\partial_N\phi)^2 \dsigma,
\end{equation}
where $\dsigma=\sqrt{1+|\nabla h|^2}\dx$ is the surface measure on $\partial\Omega$. 

The first proof of an inequality of the form \eqref{Rellich-original}
was obtained by an integration by parts argument by 
F. Rellich \cite{Re40}. He was originally interested in studying  the eigenvalues of the Laplacian in star-shaped domains. This identity plays a key role in many questions related to elliptic PDEs, for example it was used by Jerison and Kenig in their famous work on the Laplacian on Lipschitz domains \cite{JeKe80, JeKe81Dirichlet, JeKe81Neumann} and by Verchota \cite{Verchota84} who used Rellich identities implicitly in his work on layer potentials. It also plays a central role in the study of various questions in inverse problems (see e.g. \cite{AmKa04}) and acoustic scattering (see the survey paper \cite{ChGrLaSp12} which contains many references). Identities of the form obtained by Rellich also appear in many works connected to the multiplier method. The original proof of the Rellich identity makes use of the multiplier $x\cdot \nabla u$, used later by Morawetz~\cite{Morawetz1968} and J.-L. Lions~\cite{Lions1988}. Payne and Weinberger \cite{PaWe55, PaWe58} later  generalized the method and extended it to second-order elliptic systems with variable coefficients. Interestingly H\"ormander \cite{Ho54} had already obtained a general identity in 1954. In other communities, this multiplier or identity is better known as the famous Derrick-Pohozaev identity, used to prove the non-existence of solutions to some nonlinear elliptic equations. 

In this paper, we are going to prove several estimates which clarifies the dependance of the estimate~\eqref{Rellich-original} on the domain. Hereafter, given a function $f=f(x,y)$ we use $f\arrowvert_{y=h}$ as a short notation for the function $x\mapsto f(x,h(x))$.

\begin{theorem}\label{G-Lip-intro}
Let $d\ge 1$. For all $h\in C^{1}(\xT^d)$ and for all 
$\zeta\in H^1(\xT^d)$, the traces of the derivatives 
$(\nabla_{x,y}\phi)\arrowvert_{y=h}$ are well-defined and belong to $L^2(\xT^d)$. In addition, there holds
\be\label{d10-bisz-intro}
\int_{\xT^d}  (\partial_N\phi)(x,h(x))^2 \dx \le   
40\int_{\xT^d}(1+|\nabla h(x)|^2)^2 |\nabla \zeta(x)|^2 \dx,
\ee
and
\be\label{d10-bise-intro}
\int_{\xT^d} \la (\nabla_{x,y}\phi)(x,h(x))\ra^2\dx
\le   
41\int_{\xT^d}(1+|\nabla h(x)|^2)^2 |\nabla \zeta(x)|^2 \dx.
\ee
\end{theorem}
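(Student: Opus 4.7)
The plan is to flatten the domain by the change of variables $\Phi(x,y)=\phi(x,y+h(x))$ on $\omega=\xT^d\times(-\infty,0)$, reducing the problem to a divergence-form elliptic equation whose coefficients depend only on $x$, and then to apply a Rellich-type identity using the vertical multiplier $\partial_y\Phi$.

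A direct computation shows that $\Phi$ satisfies $\cn_{x,y}(A\nabla_{x,y}\Phi)=0$ weakly in $\omega$ with $\Phi|_{y=0}=\zeta$, where
\[
A(x)=\begin{pmatrix} I & -\nabla h(x) \\ -\nabla h(x)^T & 1+|\nabla h(x)|^2\end{pmatrix}
\]
is symmetric positive definite with $\det A=1$ and, crucially, independent of $y$. Assuming smooth data for the moment, I would test this equation against $\partial_y\Phi$ on $\xT^d\times(-R,0)$ and exploit the identity $\partial_y(\nabla\Phi\cdot A\nabla\Phi) = 2\nabla(\partial_y\Phi)\cdot A\nabla\Phi$, valid because $A=A^T$ and $\partial_y A=0$. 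Integration by parts converts the bulk term into a boundary integral at $y=0$, the decay of $\nabla\Phi$ as $y\to-\infty$ kills the contribution at $y=-R$, and the remaining algebra yields the clean Rellich identity
\begin{equation*}
\int_{\xT^d}(1+|\nabla h(x)|^2)\bigl(\partial_y\Phi(x,0)\bigr)^2\,dx = \int_{\xT^d}|\nabla\zeta(x)|^2\,dx.
\end{equation*}

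Differentiating $\zeta(x)=\phi(x,h(x))$ in $x$ gives $\partial_y\phi|_{y=h}=\partial_y\Phi(\cdot,0)$ and $\nabla_x\phi|_{y=h}=\nabla\zeta-\partial_y\Phi(\cdot,0)\,\nabla h$, so substituting into \eqref{eq:normal} produces
\[
\partial_N\phi|_{y=h}=\sqrt{1+|\nabla h|^2}\,\partial_y\Phi(\cdot,0)-\frac{\nabla h\cdot\nabla\zeta}{\sqrt{1+|\nabla h|^2}}.
\]
Squaring, using $|\nabla h|^2/(1+|\nabla h|^2)\le 1$ and Young's inequality, and plugging in the Rellich identity controls $\|\partial_N\phi\|_{L^2}^2$ by a constant times $\|\nabla\zeta\|_{L^2}^2$, which already implies \eqref{d10-bisz-intro}; the weight $(1+|\nabla h|^2)^2$ in the statement is simply a convenient majorization that leaves room to absorb geometric constants. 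Estimate \eqref{d10-bise-intro} then follows from the analogous expansion of $|\nabla_x\phi|^2+(\partial_y\phi)^2$ at $y=h$.

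The principal obstacle is the limited regularity: $h\in C^1$ and $\zeta\in H^1$ do not yield a classically smooth $\Phi$, and since the test function $\partial_y\Phi$ does not vanish on $\{y=0\}$ the weak formulation of the equation alone cannot justify the integration by parts. I would address this by mollification: select $h^\ep\to h$ in $C^1$ and $\zeta^\ep\to\zeta$ in $H^1$, apply the smooth-data identity to the corresponding solutions $\Phi^\ep$, extract a weak $L^2$ limit of $\partial_y\Phi^\ep(\cdot,0)$ from the uniform bound, and identify this limit with $\partial_y\Phi(\cdot,0)$ by distributional uniqueness against smooth test functions on $\xT^d$. This simultaneously yields the $L^2$ existence of the boundary traces and the claimed quantitative estimates.
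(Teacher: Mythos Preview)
Your proposal is correct, and the Rellich identity you state,
\[
\int_{\xT^d}(1+|\nabla h|^2)\,\mathcal{B}^2\,\dx = \int_{\xT^d}|\nabla\zeta|^2\,\dx,\qquad \mathcal{B}=\partial_y\Phi(\cdot,0)=(\partial_y\phi)\arrowvert_{y=h},
\]
is exactly the paper's identity $R=0$ (Proposition~\ref{prop-Rellich}) rewritten after substituting $\mathcal{V}=\nabla\zeta-\mathcal{B}\nabla h$. So at the level of the underlying Rellich input the two proofs coincide: your flattened computation with the multiplier $\partial_y\Phi$ is the pullback of the paper's divergence-free vector field $X$.

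Where you genuinely diverge is in how the identity is exploited. The paper keeps it in the form $\int(G(h)\zeta)^2\le\int(1+|\nabla h|^2)|\mathcal{V}|^2$ and then runs a somewhat delicate absorption argument (Step~3, with auxiliary functions $\eps,\lambda,\Lambda$) to close on $G(h)\zeta$, picking up the weight $(1+|\nabla h|^2)^2$ and the constant $40$. Your route is more direct: from $\partial_N\phi=\sqrt{1+|\nabla h|^2}\,\mathcal{B}-(\nabla h\cdot\nabla\zeta)/\sqrt{1+|\nabla h|^2}$ and one application of Young's inequality you get
\[
\int_{\xT^d}(\partial_N\phi)^2\,\dx\le 2\int_{\xT^d}(1+|\nabla h|^2)\mathcal{B}^2\,\dx+2\int_{\xT^d}\frac{(\nabla h\cdot\nabla\zeta)^2}{1+|\nabla h|^2}\,\dx\le 4\int_{\xT^d}|\nabla\zeta|^2\,\dx,
\]
which is strictly sharper than \eqref{d10-bisz-intro} and in fact extends the paper's one-dimensional estimate \eqref{eq:Ghzetafromzetax} to all $d\ge 1$; the weight $(1+|\nabla h|^2)^2$ in the statement is then indeed slack, as you say. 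The same reasoning gives $\int|(\nabla_{x,y}\phi)\arrowvert_{y=h}|^2\le 4\int|\nabla\zeta|^2$, improving \eqref{d10-bise-intro}. Your approximation step for $h\in C^1$, $\zeta\in H^1$ is essentially identical to the paper's Step~4.
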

\begin{rem}\label{R:smooth}
$(i)$ Compared to \e{Rellich-original},  
the rather surprising feature of \e{d10-bisz-intro} and \e{d10-bise-intro} is the fact that the right-hand sides can be estimated even if $h$ is {\em not} a Lipschitz function. 
For example, we can write that
$$
\int_{\xT^d}(1+|\nabla h|^2)^2 |\nabla \zeta|^2 \dx\le 2\lA \nabla\zeta\rA_{L^2}^2+2\lA \nabla \zeta\rA_{L^\infty}^2\lA \nabla h\rA_{L^4}^4.
$$
In the same vein, if $\zeta=h$, we obtain from~\e{d10-bise-intro} that 
$$
\lA (\nabla_{x,y}\phi)\arrowvert_{y=h}\rA_{L^2}
\le   7 \Big( \lA \nabla h\rA_{L^2}+\lA \nabla h\rA_{L^6}^3\Big).
$$
Notice that the case $\zeta=h$ is 
interesting for the Hele-Shaw equation (see \cite{ChangLaraGuillenSchwab,AMS,NPausader,Dong-Gancedo-Nguyen}).

$(ii)$ One could extend the estimates~\e{d10-bisz-intro} and 
\e{d10-bise-intro} to the cases where 
$h$ belongs to $W^{1,\infty}(\xT^d)$ instead of $C^1(\xT^d)$ 
by using the arguments in Ne{\v c}as~\cite[Chapter 5]{Necas}, 
Brown~\cite{Brown1994} or McLean~\cite[Theorem 4.24]{McLean}. 
\end{rem}

Consider now 
the Dirichlet-to-Neumann operator $G(h)$ defined by 
$$
G(h)\zeta=\big(\partial_y\phi-\nabla h\cdot\nabla \phi\big)\big\arrowvert_{y=h}=\sqrt{1+|\nabla h|^2} \partial_N \phi\big\arrowvert_{y=h}.
$$
From the previous inequalities, we immediately obtain the following 

\begin{cor}\label{coro.G-Lip-intro}
Let $d\ge 1$. For all $h\in C^{1}(\xT^d)$ and for all 
$\zeta\in H^1(\xT^d)$, there holds 
$G(h)\zeta\in L^2(\xT^d)$ together with the estimate
\be\label{G-Lip-n0}
\int_{\xT^d} \frac{(G(h)\zeta)^2}{1+|\nabla h|^2}\dx \le   
40\int_{\xT^d}(1+|\nabla h|^2)^2 |\nabla \zeta|^2 \dx.
\ee
\end{cor}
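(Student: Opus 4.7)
The plan is to deduce Corollary~\ref{coro.G-Lip-intro} directly from Theorem~\ref{G-Lip-intro}, by exploiting the very definition
$$
G(h)\zeta = \sqrt{1+|\nabla h|^2}\,\partial_N\phi\big\arrowvert_{y=h}.
$$
Squaring and dividing by $1+|\nabla h|^2$ yields the pointwise identity
$$
\frac{(G(h)\zeta)^2(x)}{1+|\nabla h(x)|^2} = \bigl(\partial_N\phi\bigr)(x,h(x))^2,
$$
so the weighted integral appearing on the left of \e{G-Lip-n0} is literally equal to the $L^2$ integral on the left of \e{d10-bisz-intro}. The bound with constant $40$ then follows from \e{d10-bisz-intro} as a direct rewriting.

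Before invoking the inequality, I would point out why the quantities are meaningful. Theorem~\ref{G-Lip-intro} guarantees, under the assumption $h\in C^1(\xT^d)$ and $\zeta\in H^1(\xT^d)$, that $(\nabla_{x,y}\phi)\ah$ is a well-defined element of $L^2(\xT^d)$; in particular the trace $\partial_N\phi\ah$ belongs to $L^2(\xT^d)$. Since $\xT^d$ is compact and $h\in C^1$, we have $\sqrt{1+|\nabla h|^2}\in L^\infty(\xT^d)$, so multiplication by this weight keeps $G(h)\zeta$ in $L^2(\xT^d)$. This justifies the statement $G(h)\zeta\in L^2(\xT^d)$.

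There is no real obstacle: the nontrivial analytic work — the existence of the boundary trace and the quantitative Rellich-type estimate with explicit constant — has already been absorbed into Theorem~\ref{G-Lip-intro}, and the corollary is obtained by matching factors of $\sqrt{1+|\nabla h|^2}$ on both sides.
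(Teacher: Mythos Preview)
Your argument is correct and matches the paper's reasoning: the paper states the corollary as an immediate consequence of Theorem~\ref{G-Lip-intro}, and in fact the proof of that theorem (Step~1) explicitly identifies \e{d10-bisz-intro} with \e{G-Lip-n0} via the same pointwise identity $\frac{(G(h)\zeta)^2}{1+|\nabla h|^2} = (\partial_N\phi\arrowvert_{y=h})^2$ that you use.
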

\begin{rem}In particular,
\be\label{G-Lip-n1}
\int_{\xT^d} (G(h)\zeta)^2\dx \le   
40\,  (1+\lA \nabla h\rA_{L^\infty}^2)^3
\int_{\xT^d} |\nabla \zeta|^2 \dx.
\ee
As said above, compared to \e{G-Lip-n1}, the estimate~\e{G-Lip-n0} is 
quite surprising in that the right-hand side of the former might be finite even if $\nabla h$ is unbounded. In this case, we do not control the $L^2$-norm of $G(h)\zeta$ but only a weaker quantity. 
\end{rem}

In dimension $d=1$, we can extend the above result in two directions. The first one is a stronger version of estimate~\e{G-Lip-n0} where the right-hand side does not involve $h$ at all, while the second version generalizes to $L^p$ estimates. 
If $d=1$, we will denote simply by $f_x$ the derivative $\partial_x f$.

\begin{thm}\label{thm:Rellich1D}
For all $h\in C^{1}(\xT)$ and for all $\zeta\in H^1(\xT)$ we have
\begin{align}\label{eq:Ghzetafromzetax}
\int_{\xT} \frac{(G(h)\zeta)^2 }{1 + h_x^2} \dx \leq 4 \int_{\xT}  \zeta_x^2 \dx,
\end{align}
and
\begin{align}\label{eq:zetaxfromGhzeta}
\int_{\xT}  \frac{\zeta_x^2}{1 + h_x^2} \dx \leq 4\int_{\xT} (G(h)\zeta)^2  \dx.
\end{align}
\end{thm}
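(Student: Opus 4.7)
The plan is to introduce the harmonic conjugate $\psi$ of $\phi$ in $\Omega$, derive an exact Rellich-type identity from Cauchy's theorem applied to the holomorphic function $(f'(z))^2$ (with $f=\phi+i\psi$), and extract both inequalities by Young's inequality with an optimized parameter. Since $\Omega$ is topologically a cylinder, single-valuedness of $\psi$ is not automatic; it follows from the fact that the period of the conjugate $1$-form is independent of $y$ (closedness) and vanishes in the limit $y\to-\infty$ by the decay assumption on $\nabla\phi$. Writing $\eta(x)\defn\psi(x,h(x))$ and differentiating $f\circ z$ along $z(x)=x+ih(x)$ via the Cauchy--Riemann equations, one extracts the key boundary relations
\[
\eta_x=-G(h)\zeta,\qquad \zeta_x=G(h)\eta,\qquad f'(z(x))=\frac{\zeta_x-iG(h)\zeta}{1+ih_x}.
\]

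\textbf{The Rellich identity.} Since $(f'(z))^2$ is holomorphic in $\Omega$ and $|f'|^2=|\nabla\phi|^2\to0$ uniformly as $y\to-\infty$, applying Cauchy's theorem to $(f'(z))^2\,dz$ on $\Omega\cap\{y>-R\}$ and letting $R\to\infty$ gives $\int_{\partial\Omega}(f'(z))^2\,dz=0$. Inserting the boundary formula for $f'$ and taking the real part produces
\[
\int_\xT\frac{\zeta_x^2-(G(h)\zeta)^2}{1+h_x^2}\,\dx=2\int_\xT\frac{h_x\,\zeta_x\,G(h)\zeta}{1+h_x^2}\,\dx.
\]

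\textbf{From identity to inequality.} For any $\lambda>0$, Young's inequality yields
\[
\Bigl|\,2\int_\xT\frac{h_x\,\zeta_x\,G(h)\zeta}{1+h_x^2}\,\dx\,\Bigr|\le\lambda\int_\xT\frac{h_x^2\zeta_x^2}{1+h_x^2}\,\dx+\frac{1}{\lambda}\int_\xT\frac{(G(h)\zeta)^2}{1+h_x^2}\,\dx.
\]
Combining with the Rellich identity above and using $h_x^2/(1+h_x^2)=1-1/(1+h_x^2)$ gives
\[
\Bigl(1-\frac{1}{\lambda}\Bigr)\int_\xT\frac{(G(h)\zeta)^2}{1+h_x^2}\,\dx\le\lambda\int_\xT\zeta_x^2\,\dx+(1-\lambda)\int_\xT\frac{\zeta_x^2}{1+h_x^2}\,\dx.
\]
The choice $\lambda=2$ makes the last term non-positive (since the integrand is non-negative), producing exactly \eqref{eq:Ghzetafromzetax}. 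The second estimate \eqref{eq:zetaxfromGhzeta} then follows by applying the same argument to $\eta$ in place of $\zeta$, which is legitimate because $\eta$ is itself the trace of the harmonic function $\psi$ whose gradient decays at $y=-\infty$; substituting $G(h)\eta=\zeta_x$ and $\eta_x=-G(h)\zeta$ yields the desired inequality.

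\textbf{Main hurdles.} The genuinely creative step is choosing the contour integral $\int_{\partial\Omega}(f')^2\,dz=0$: this is the multiplier that generates a quadratic identity in $\zeta_x$ and $G(h)\zeta$ with precisely the weight $1/(1+h_x^2)$, and the sharp constant $4$ then drops out of Young's inequality at $\lambda=2$ (minimizer of $\lambda^2/(\lambda-1)$). The technical hurdle is justifying Cauchy's theorem at the given regularity $h\in C^1$, $\zeta\in H^1$: one needs square-integrability of the boundary traces of $\nabla_{x,y}\phi$, which is supplied by \thmref{G-Lip-intro}, together with a standard approximation argument (e.g., working first with the shifted function $\phi(x,y-\varepsilon)$ and then letting $\varepsilon\to 0$) to reduce to a smooth setting where the contour integration is immediate.
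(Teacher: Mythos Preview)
Your argument is correct and arrives at exactly the same Rellich identity as the paper,
\[
\int_{\xT}\frac{\zeta_x^2-(G(h)\zeta)^2}{1+h_x^2}\,\dx
=2\int_{\xT}\frac{h_x\,\zeta_x\,G(h)\zeta}{1+h_x^2}\,\dx,
\]
followed by the same Young inequality (your parameter choice $\lambda=2$ reproduces the paper's splitting). The difference is in how the identity is obtained and how the second estimate is deduced. The paper plugs the one-dimensional expressions for $\mathcal{B}$ and $\mathcal{V}$ into the vanishing of the Rellich functional $R$ from \propref{prop-Rellich} (Stokes' theorem on the divergence-free field $X=(2\phi_y\phi_x,\,\phi_y^2-\phi_x^2)$), whereas you invoke Cauchy's theorem on $(f')^2\,dz$; since $(f')^2=(\phi_x-i\phi_y)^2$ has real and imaginary parts $-(X_2,X_1)$, these are the same computation in real versus complex notation. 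For \eqref{eq:zetaxfromGhzeta} the paper simply repeats the Young step with the roles of $\zeta_x$ and $G(h)\zeta$ interchanged, while you use the conjugate-duality relations $\eta_x=-G(h)\zeta$, $G(h)\eta=\zeta_x$ to reduce to \eqref{eq:Ghzetafromzetax} applied to $\eta$. Your route makes the $\zeta\leftrightarrow\eta$ symmetry explicit and avoids a second optimization, at the cost of checking single-valuedness of $\psi$ and that $\eta\in H^1(\xT)$ (which indeed follows from $\eta_x=-G(h)\zeta\in L^2$); the paper's route stays entirely real-variable and reuses machinery already set up for general $d$.
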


As a corollary, one can get a surprising geometric estimate. 

\begin{cor}
Denote by $\kappa$ the curvature of $\partial\Omega$ and by $\theta$ the angle the interface $\partial\Omega$ 
makes with the $x$-axis, defined by
$$
\kappa=\partial_{x}\left(\frac{h_x}{\sqrt{1+h_x^2}}\right)\quad, \quad
\theta= \arctan(h_x).
$$
Then, there holds
$$
\lA G(h)\kappa\rA_{H^{-1}}\le 2\lA \theta_x\rA_{L^2}.
$$
\end{cor}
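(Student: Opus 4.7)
The plan is to combine duality for the $H^{-1}$ norm with the self-adjointness of $G(h)$ and the weighted Rellich estimate~\e{eq:Ghzetafromzetax}.

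First, I would rewrite $\kappa$ in terms of $\theta$. Since $\tan\theta=h_x$, one has $\sin\theta = h_x/\sqrt{1+h_x^2}$ and $\cos\theta = 1/\sqrt{1+h_x^2}$, so that
$$
\kappa = \partial_x\sin\theta = \cos\theta\cdot\theta_x = \frac{\theta_x}{\sqrt{1+h_x^2}}.
$$
Because $G(h)\kappa$ has zero mean on $\xT$ (as the image of any function under the Dirichlet-to-Neumann operator), its $H^{-1}$-norm is realized by duality against mean-zero test functions $\psi\in H^1(\xT)$.

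Next, for any such $\psi$, I would use the self-adjointness of $G(h)$ and the identity above to write
$$
\int_\xT G(h)\kappa\cdot\psi\,\dx = \int_\xT \kappa\cdot G(h)\psi\,\dx = \int_\xT \theta_x \cdot \frac{G(h)\psi}{\sqrt{1+h_x^2}}\,\dx.
$$
Cauchy-Schwarz factors the right-hand side as $\lA\theta_x\rA_{L^2}$ times the $L^2$-norm of $G(h)\psi/\sqrt{1+h_x^2}$, which is precisely the quantity controlled by~\e{eq:Ghzetafromzetax}: that weighted estimate gives
$$
\lA G(h)\psi/\sqrt{1+h_x^2}\rA_{L^2} \le 2\lA \psi_x\rA_{L^2} \le 2\lA \psi\rA_{H^1}.
$$
Taking the supremum over $\psi$ with $\lA \psi\rA_{H^1}\le 1$ yields the announced bound $\lA G(h)\kappa\rA_{H^{-1}}\le 2\lA \theta_x\rA_{L^2}$.

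The only delicate point I anticipate is that, if $h$ is merely $C^1$, then $\kappa$ need not lie in $H^{1/2}(\xT)$, so $G(h)\kappa$ and the self-adjointness identity must be interpreted by duality. This would be handled by a standard density argument: apply the computation for smooth $h$ (and smooth $\psi$), and then pass to the limit, using that the estimate on the right-hand side of~\e{eq:Ghzetafromzetax} depends only on $\lA\theta_x\rA_{L^2}$ and is thus stable under regularization.
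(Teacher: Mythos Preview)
Your proof is correct and follows essentially the same route as the paper: self-adjointness of $G(h)$, a weighted Cauchy--Schwarz that produces the factor $\lA G(h)\psi/\sqrt{1+h_x^2}\rA_{L^2}$, and then the Rellich estimate~\e{eq:Ghzetafromzetax}. The only cosmetic difference is that the paper computes $\int_\xT(1+h_x^2)\kappa^2\,\dx=\int_\xT\theta_x^2\,\dx$ directly from $\kappa=h_{xx}/(1+h_x^2)^{3/2}$, whereas you first rewrite $\kappa=\theta_x/\sqrt{1+h_x^2}$ and then regroup; these are the same computation.
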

\begin{proof}
Notice that $\kappa=h_{xx}/(1+h_x^2)^{3/2}$. 
Since $G(h)$ is self-adjoint for the $L^2$-scalar product, for any function $\varphi\in H^1(\xT)$, we deduce from \eqref {eq:Ghzetafromzetax} that
\begin{align*}
\int_{\xT}\varphi G(h)\kappa \dx
&=\int_{\xT}\kappa G(h)\varphi \dx 
\le \left(\int_{\xT} (1+h_x^2)\kappa^2\dx\right)^\mez\left(
\int_{\xT}\frac{(G(h)\varphi)^2}{1+h_x^2}\dx\right)^\mez\\
&\le 2\left(\int_{\xT} \frac{h_{xx}^2}{(1+h_x^2)^2}\dx\right)^\mez \lA \varphi_x\rA_{L^2}=
2\left(\int_{\xT} \theta_x^2\dx\right)^\mez \lA \varphi_x\rA_{L^2},
\end{align*}
and the result follows.
\end{proof}

Our final result extends \e{eq:Ghzetafromzetax} and \e{eq:zetaxfromGhzeta} to the $L^p$-setting. 
In dimension $d=1$, 
the normal and tangential unit vectors are 
defined by 
\begin{align}\label{eq:normalnew}
N = \frac{1}{\sqrt{1+h_x^2}}\begin{pmatrix} -h_x\\ 1\end{pmatrix}\quad,\quad T = \frac{1}{\sqrt{1+h_x^2}}\begin{pmatrix}  1\\ h_x\end{pmatrix},
\end{align}
and the arc length measure on $\partial \Omega$ is $\dsigma = \sqrt{1 + h_x^2} \dx$.

\begin{thm}\label{thm:Riemest}
For all $1< p \leq 2$, there exists a constant $C_p > 0$ such that, for all 
$h\in C^{1}(\xT)$ and for all $\zeta\in H^1(\xT)$, if $\phi$ is 
defined by \eqref{eq:phi}, then the following two inequalities hold: 
\begin{equation}\label{w1}
\int_{\partial \Omega} \frac{\abs{\partial_N \phi}^p }{(1 + h_x^2)^{\frac{p-1}{2}}} \dsigma \le C_p \int_{\partial \Omega}  \abs{\partial_T \phi}^p (1 + h_x^2)^{\frac{p-1}{2}} \dsigma,
\end{equation}
and 
\begin{equation}\label{w2}
\int_{\partial \Omega} \frac{\abs{\partial_T \phi}^p }{(1 + h_x^2)^{\frac{p-1}{2}}} \dsigma \le C_p \int_{\partial \Omega}  \abs{\partial_N \phi}^p (1 + h_x^2)^{\frac{p-1}{2}} \dsigma.
\end{equation}
\end{thm}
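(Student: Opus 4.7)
The plan is to conformally flatten $\Omega$ by a Riemann map, reducing the relationship between $\partial_N\phi$ and $\partial_T\phi$ to the action of the Hilbert transform on a straight boundary, and then interpolate with the $L^2$-estimate from Theorem~\ref{thm:Rellich1D} in order to reach the precise weights $(1+h_x^2)^{\pm(p-1)/2}$.

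\smallskip

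First, let $\Psi\colon\xT\times(-\infty,0]\to\overline{\Omega}$ be a periodic conformal parametrization (existence by the Riemann mapping theorem applied to the graph domain, continuous extension up to $\partial\Omega$ by Carath\'eodory). Writing $\Psi(\alpha,0) = (X(\alpha), h(X(\alpha)))$, one has $|\Psi_\alpha|^2 = X_\alpha^2(1+h_x^2)$, and $u=\phi\circ\Psi$ is harmonic in the (periodic) half-plane. Harmonic conjugacy yields $\partial_\beta u\arrowvert_{\beta=0} = H\bigl(\partial_\alpha u\arrowvert_{\beta=0}\bigr)$, with $H$ the periodic Hilbert transform. Combined with the chain rule and the Cauchy--Riemann equations for $\Psi$, this translates into the clean identity
\[
|\Psi_\alpha|\,\partial_N\phi \;=\; H\bigl(|\Psi_\alpha|\,\partial_T\phi\bigr) \quad\text{on }\partial\Omega.
\]
Taking $L^p(\dif\alpha)$-norms and using $\dsigma = |\Psi_\alpha|\,\dif\alpha$ together with the boundedness $\|H\|_{L^p(\xT)\to L^p(\xT)} \le C_p$ for $1 < p < \infty$ gives the ``conformal'' Rellich inequality
\[
\int_{\partial\Omega}|\Psi_\alpha|^{p-1}|\partial_N\phi|^p\,\dsigma \;\le\; C_p^p\int_{\partial\Omega}|\Psi_\alpha|^{p-1}|\partial_T\phi|^p\,\dsigma.
\]

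\smallskip

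Since $|\Psi_\alpha|^{p-1} = X_\alpha^{p-1}(1+h_x^2)^{(p-1)/2}$, this bound still carries the Jacobian $X_\alpha$ of the Riemann map, which is not controlled by $(1+h_x^2)$ alone. To eliminate it and reach the intrinsic weights of \eqref{w1}--\eqref{w2}, I would interpolate with the $L^2$-endpoint supplied by Theorem~\ref{thm:Rellich1D}: viewing $T\colon \zeta_x\mapsto G(h)\zeta$ as a linear operator, Theorem~\ref{thm:Rellich1D} provides
\[
T\colon L^2(\dx)\to L^2\bigl((1+h_x^2)^{-1}\,\dx\bigr)
\]
with no Jacobian factor, while the previous inequality gives $T\colon L^q(J^{q-1}\,\dx)\to L^q(J^{q-1}\,\dx)$ for every $1 < q < \infty$, where $J = X_\alpha\circ X^{-1}$. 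Complex (Stein--Weiss) interpolation applied to the analytic family $T_z=(1+h_x^2)^{z}T$, with parameters chosen so that the two $J$-contributions cancel in the interpolant, should yield $\int|G(h)\zeta|^p(1+h_x^2)^{1-p}\,\dx\le C_p\int|\zeta_x|^p\,\dx$ for $1 < p \le 2$; unwinding via $G(h)\zeta = \sqrt{1+h_x^2}\,\partial_N\phi$ and $\zeta_x = \sqrt{1+h_x^2}\,\partial_T\phi$ delivers \eqref{w1}. The estimate \eqref{w2} follows by the symmetric argument, interpolating the reverse direction of Theorem~\ref{thm:Rellich1D} with the same conformal $L^q$-endpoint.

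\smallskip

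\noindent\textbf{Main obstacle.} The delicate point is the interpolation: the conformal--Hilbert inequality carries the domain-dependent Jacobian $X_\alpha$, and the very purpose of the $L^2$-endpoint from Theorem~\ref{thm:Rellich1D} is to supply a Jacobian-free landmark against which to interpolate. Arranging the analytic family so that the Jacobian cancels from the final weight, while keeping the constants under purely geometric control, is the real work. That the admissible range stops at $p=2$ is natural, reflecting the failure of the Hilbert transform at $p=1$, and the symmetry of the weights $(1+h_x^2)^{\pm(p-1)/2}$ around $p=2$ mirrors the symmetric $L^2$ weights $(1+h_x^2)^{\pm 1/2}$ of Theorem~\ref{thm:Rellich1D}.
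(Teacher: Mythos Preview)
Your conformal reduction is exactly right, and the paper proceeds the same way: after pulling back by $\Psi$, the relation between $\partial_N\phi$ and $\partial_T\phi$ becomes the Hilbert transform acting on $\partial_\alpha\tilde\zeta$. The divergence from the paper's argument is in the interpolation step, and there your scheme does not close.

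Your two endpoints are (A) the Jacobian-free $L^2$ bound from Theorem~\ref{thm:Rellich1D} and (B) the ``conformal'' $L^q$ bound with weights $J^{q-1}$. Stein--Weiss interpolation between these produces, on the source side, a weight $J^{(q-1)p\theta/q}$; this exponent vanishes only when $\theta=0$ (i.e.\ $p=2$) or $q=1$, which is unavailable as a strong-type endpoint. Your proposed analytic family $T_z=(1+h_x^2)^zT$ cannot help: it shifts powers of $1+h_x^2$ but never touches $J=X_\alpha\circ X^{-1}$, which is an independent positive function. So the Jacobian does not ``cancel in the interpolant'' as you hope; the obstacle you name is in fact fatal to the scheme as written.

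The paper's fix is to stay in the $\alpha$-variable throughout and change the second endpoint. Rewriting \eqref{eq:Ghzetafromzetax} in conformal coordinates shows it is precisely the statement that $\mathbb H\colon L^2(\xT,v\,\dif\alpha)\to L^2(\xT,u\,\dif\alpha)$ is bounded, with $u=|Z_\alpha|^{-1}(1+\tan^2 g)^{-1/2}$ and $v=|Z_\alpha|^{-1}(1+\tan^2 g)^{1/2}$; note the $|Z_\alpha|^{-1}$ appears in \emph{both} weights. The other endpoint is the unweighted weak-type bound $\mathbb H\colon L^1(\xT,\dif\alpha)\to L^{1,\infty}(\xT,\dif\alpha)$. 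Stein--Weiss interpolation with change of measure between a weak $(1,1)$ endpoint and a strong $(2,2)$ endpoint then gives $\mathbb H\colon L^p(v^{p-1}\dif\alpha)\to L^p(u^{p-1}\dif\alpha)$ for $1<p<2$. Unwinding, the factor $|Z_\alpha|^{-(p-1)}$ inside $u^{p-1}$ and $v^{p-1}$ combines with the $|Z_\alpha|^p$ coming from $|\partial_\alpha\tilde\zeta|^p=|Z_\alpha|^p|\partial_T\phi|^p$ (and similarly for $|D|\tilde\zeta$) to leave exactly one $|Z_\alpha|$, which becomes $\dif\sigma$. That is how the Jacobian disappears: not by cancellation in an analytic family, but because the $L^2$ endpoint, once expressed in $\alpha$, already carries the right power of $|Z_\alpha|$, and the $L^1$ endpoint is genuinely unweighted. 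This also explains transparently why the range is $1<p\le 2$.
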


\begin{rmk}
The estimates do not extend to $p=1$, as can be seen by assuming that $h=0$. Indeed, 
if $h=0$ and $p=1$, 
then
$$
\int_{\partial \Omega} \frac{\abs{\partial_N \phi}^p }{(1 + h_x^2)^{\frac{p-1}{2}}} \dsigma=
\int_{\xT}\la \Hil\partial_x \zeta\ra \dx\quad, \quad
\int_{\partial \Omega}  \abs{\partial_T \phi}^p (1 + h_x^2)^{\frac{p-1}{2}} \dsigma=\int_{\xT}\la \partial_x \zeta\ra \dx,
$$
where $\Hil$ is the periodic Hilbert transform (see~\eqref{eq:Hilbddnew}) 
and hence we see that the estimates do not hold for $p=1$, since $\Hil$ is not bounded on $L^1(\xT)$. 
\end{rmk}

\section{Refined Rellich estimates}\label{A:Rellich} 

In this section we prove Theorem~\ref{G-Lip-intro} and Theorem~\ref{thm:Rellich1D}. 

\subsection{Proof of Theorem~\ref{G-Lip-intro}}
The proof is decomposed into four steps. 
We start by proving the quantitative estimates~\e{d10-bisz-intro} and \e{d10-bise-intro} under the additional assumption that the functions $h$ and $\zeta$ are smooth, so that all calculations will be easily justified. Then, 
we will consider in the fourth step the general case by an approximation argument.

\bigbreak

{\em Step 1: Reduction to an estimate for $G(h)$.} 
Assume that $h$ and $\zeta$ belong to $C^\infty(\xT^d)$. 
Then \e{eq:phi} is a classical elliptic boundary problem, which admits a unique solution $\phi\in C^\infty(\overline{\Omega})$ such that 
$\nabla_{x,y}\phi\in L^2(\Omega)$. 

By definition of the Dirichlet-to-Neumann operator $G(h)$, there holds
\be\label{defi:G11}
G(h)\zeta=\big(\partial_y\phi-\nabla h\cdot\nabla \phi\big)\big\arrowvert_{y=h}=\sqrt{1+|\nabla h|^2} \partial_N \phi\big\arrowvert_{y=h}.
\ee
(Let us recall that $\nabla$ denotes the gradient with respect to  $x\in \xT^d$.)  We see that \e{d10-bisz-intro} is equivalent to
\be\label{d10-bisd}
\int_{\xT^d} \frac{(G(h)\zeta)^2}{1+|\nabla h|^2}\dx \le   
40\int_{\xT^d}(1+|\nabla h|^2)^2 |\nabla \zeta|^2 \dx.
\ee
Let us show that  \e{d10-bise-intro} also follows from \e{d10-bisd}. 
To do so, it is convenient to introduce the notations
$$
\mathcal{V}=(\nabla\phi)\arrowvert_{y=h}, \qquad \mathcal{B}=(\partial_y\phi)\arrowvert_{y=h}.
$$

Using \e{defi:G11}, we have\be\label{defi:GBV}
G(h)\zeta=\mathcal{B}-\nabla h\cdot \mathcal{V}.
\ee
On the other hand, it follows from the chain rule that
$$
\nabla \zeta=\nabla (\phi\arrowvert_{y=h})=\mathcal{V}+
\mathcal{B}\nabla h.
$$
By combining the previous identities, we see that $\mathcal{B}$ and $\mathcal{V}$ 
can be defined only in terms of $h$ and $\zeta$ by 
means of the formulas
\be\label{defi:BV2}
\mathcal{B}= \frac{G(h)\zeta+\nabla \zeta \cdot \nabla h}{1+|\nabla h|^2},\qquad 
\mathcal{V}=\nabla \zeta-\mathcal{B}\nabla h.
\ee
It follows that
\be\label{Rellich:end}
\begin{aligned}
\la (\nabla_{x,y}\phi)\arrowvert_{y=h}\ra^2&=((\partial_y\phi)\arrowvert_{y=h})^2 +
\big\vert (\nabla\phi) \arrowvert_{y=h}\big\vert^2\\
&=\mathcal{B}^2+\la \mathcal{V}\ra^2  \\
& =  \frac{(G(h)\zeta)^2}{1+|\nabla h|^2} + \abs{\grad \zeta}^2 - \frac{(\nabla h\cdot \nabla \zeta)^2}{1+|\nabla h|^2}\cdot
\end{aligned}
\ee
This shows that \e{d10-bise-intro} will follow directly from \e{d10-bisd}.

Therefore, both estimates of the theorem will be proved if we show~\e{d10-bisd}. 

\smallbreak

{\em Step 2: An intermediate Rellich type estimate.} 

To prove \e{d10-bisd}, we begin by establishing a Rellich type estimate which allows to 
estimate the $L^2$-norm of $G(h)\zeta$ in terms of $\mathcal{V}=(\nabla\phi)_{\arrowvert y=h}$. 

\begin{prop}\label{prop-Rellich}
There holds
\be\label{d10-bisb}
\int_{\xT^d} (G(h)\zeta)^2\dx \le   
\int_{\xT^d} (1+|\nabla h|^2)|\mathcal{V}|^2 \dx.
\ee
\end{prop}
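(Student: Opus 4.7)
The plan is to use a classical Rellich-type divergence identity, adapted to the unbounded strip $\Omega$. Since $\phi$ is harmonic, a direct computation shows that the vector field
$$
X \defn \la\nabla_{x,y}\phi\ra^2 e_{d+1} - 2(\partial_y\phi)\,\nabla_{x,y}\phi,
$$
where $e_{d+1}$ denotes the vertical unit vector $(0,\ldots,0,1)$, is divergence-free on $\Omega$: indeed $\cnx_{x,y}(\la\nabla_{x,y}\phi\ra^2 e_{d+1}) = 2\nabla_{x,y}\phi\cdot\partial_y\nabla_{x,y}\phi$ and $\cnx_{x,y}(2\partial_y\phi\,\nabla_{x,y}\phi) = 2\partial_y\phi\,\Delta_{x,y}\phi + 2\nabla_{x,y}\phi\cdot\partial_y\nabla_{x,y}\phi$, and the two cancel when $\Delta_{x,y}\phi=0$.

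First I would integrate $\cnx_{x,y} X = 0$ over the truncated domain $\Omega_R = \Omega\cap\{y > -R\}$ and let $R\to\infty$; the boundary contribution at the artificial bottom $y=-R$ vanishes in the limit thanks to the decay assumption $\sup_x\la \nabla_{x,y}\phi(x,y)\ra\to 0$ built into \eqref{eq:phi} and the finiteness of $|\xT^d|$. On the free boundary $y=h(x)$, using $N\cdot e_{d+1}\dsigma = \dx$ and $(\partial_N\phi)\dsigma = G(h)\zeta\dx$, this reduces to the identity
\be\label{Rellich-identity-plan}
\int_{\xT^d}\la (\nabla_{x,y}\phi)\arrowvert_{y=h}\ra^2 \dx = 2\int_{\xT^d}\mathcal{B}\,G(h)\zeta\dx.
\ee

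Next I would combine \eqref{Rellich-identity-plan} with the definition $G(h)\zeta = \mathcal{B}-\nabla h\cdot\mathcal{V}$ and the decomposition $\la(\nabla_{x,y}\phi)\arrowvert_{y=h}\ra^2 = \mathcal{B}^2 + \la\mathcal{V}\ra^2$. Rearranging \eqref{Rellich-identity-plan} gives $\int\mathcal{B}^2\dx = \int\la\mathcal{V}\ra^2\dx + 2\int\mathcal{B}\,\nabla h\cdot\mathcal{V}\dx$, and substituting this into the expansion $(G(h)\zeta)^2 = \mathcal{B}^2 - 2\mathcal{B}\,\nabla h\cdot\mathcal{V} + (\nabla h\cdot\mathcal{V})^2$ makes the cross terms cancel exactly, yielding the clean integrated identity
$$
\int_{\xT^d}(G(h)\zeta)^2\dx = \int_{\xT^d}\la\mathcal{V}\ra^2\dx + \int_{\xT^d}(\nabla h\cdot\mathcal{V})^2\dx.
$$
The pointwise Cauchy-Schwarz bound $(\nabla h\cdot\mathcal{V})^2 \le \la\nabla h\ra^2\la\mathcal{V}\ra^2$ then delivers \eqref{d10-bisb}.

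The only real obstacle is the justification of the divergence theorem on the unbounded strip, and it is mild: the smoothness hypothesis of Step 1 together with the uniform decay at $y=-\infty$ makes the limit $R\to\infty$ entirely routine. The remaining work is purely algebraic manipulation at the boundary, and the fact that one obtains a genuine \emph{identity} (not just an inequality) before invoking Cauchy-Schwarz is what keeps the constant sharp.
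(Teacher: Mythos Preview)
Your proof is correct and follows essentially the same approach as the paper: both rely on the same divergence-free vector field (yours is the negative of the paper's $X=(2(\partial_y\phi)\nabla\phi;\,(\partial_y\phi)^2-|\nabla\phi|^2)$) and the same truncation-and-limit argument to obtain the Rellich identity, which in the paper's notation is $R=0$. The only difference is presentational: the paper applies the pointwise Cauchy--Schwarz bound first and then shows $R=0$, whereas you first derive the exact integrated identity $\int(G(h)\zeta)^2\dx=\int|\mathcal V|^2\dx+\int(\nabla h\cdot\mathcal V)^2\dx$ and only then estimate the last term---a slightly cleaner ordering that makes the sharpness of the constant more visible.
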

\begin{proof}
By squaring the identity~\e{defi:GBV} we get
$$
(G(h)\zeta)^2
=\mathcal{B}^2-2 \mathcal{B}\nabla h \cdot \mathcal{V} +(\nabla h\cdot \mathcal{V})^2.
$$
Since $(\nabla h\cdot \mathcal{V})^2\le \la\nabla h\ra^2 \la\mathcal{V}\ra^2$, this implies
\be\label{esti:final7}
(G(h)\zeta)^2\le \mathcal{B}^2-\la\mathcal{V}\ra^2-2\mathcal{B}\nabla h\cdot \mathcal{V} +(1+|\nabla h|^2)\abs{\mathcal{V}}^2.
\ee
So,
$$
\int_{\xT^d} (G(h)\zeta)^2\dx \le
\int_{\xT^d} (1+|\nabla h|^2)\la\mathcal{V}\ra^2 \dx+R,
$$
where
\begin{align}\label{eq:R}
R=\int_{\xT^d}\Big( \mathcal{B}^2-\la\mathcal{V}\ra^2-2\mathcal{B}\nabla h\cdot \mathcal{V}\Big)\dx.
\end{align}
We see that, to obtain~\e{d10-bisb}, it is sufficient to prove 
that $R=0$. It is interesting to observe that the latter result is a consequence of the classical Rellich identity. It can be proven 
by multiplying the equation $\Delta_{x,y}\phi=0$ by $\partial_y\phi$ and then integrating by parts. We will give an alternative proof, following~\cite{A-stab-AnnalsPDE}, which consists in 
observing that $R$ is the flux associated to a vector field. Indeed, 
$$
R=\int_{\partial\Omega} X\cdot N\dsigma
$$
where 
$X\colon \Omega\rightarrow \xR^{d+1}$ is given by
$$
X=(2(\partial_y\phi)\nabla \phi; (\partial_y\phi)^2 - |\nabla \phi|^2).
$$ 
Then the key observation is that this vector field satisfies
$\cn_{x,y} X=0$ since
$$
\partial_y \big( (\partial_y\phi)^2-|\nabla\phi|^2\big)
+2\cnx \big((\partial_y\phi)\nabla\phi\big)=
2(\partial_y\phi) \Delta_{x,y}\phi=0,
$$
as can be verified by an elementary computation. 
Now, we see that the cancellation $R=0$ comes from the Stokes' theorem. 
To rigorously justify this point, we 
truncate $\Omega$ in order to work in a smooth bounded domain. Given a parameter $\beta>0$, set
$$
\Omega_\beta=\{(x,y)\in\xT^{d}\times\xR\,;-\beta<y<h(x)\}.
$$
An application of the divergence theorem in $\Omega_\beta$ gives that
$$
0=\iint_{\Omega_\beta} \cn_{x,y}X\dydx=R+\int_{\{y=-\beta\}}X\cdot n\dsigma.
$$
Recall that the potential $\phi$ satisfies \eqref{eq:phi}
$$
\lim_{y\to-\infty}\sup_{x\in\xT^{d}}\la \nabla_{x,y}\phi(x,y)\ra=0.
$$
Therefore, 
$X$ converges to $0$ uniformly when $y$ goes to $-\infty$. 
So, by sending $\beta$ to $+\infty$, we obtain the expected result $R=0$ 
which completes the proof of the proposition.
\end{proof}

{\em Step 3: Proof of \e{d10-bisd}.}

Introduce the function $\eps\colon\xT^d\to [0,+\infty)$ defined by
$$
\eps(x)\defn\frac{1}{8(1+\la\nabla h(x)\ra^2)}\cdot
$$
Introduce also the functions
$$
\lambda(x)=1+\eps(x)\quad,\quad \Lambda(x)=1+\frac{1}{\eps(x)}\cdot
$$
Directly from the identity~\e{defi:BV2} for $\mathcal{B}$ and the elementary inequality
$$
| a+b|^2\le \lambda(x) |a|^2+\Lambda(x) |b|^2\qquad (\text{for any } (a,b,x)\in\xR^d\times\xR^d\times \xT^d), 
$$
we have the pointwise inequalities
\begin{align*}
|\nabla \zeta-\mathcal{B} \nabla h|^2&\le
  \Lambda \la \nabla \zeta\ra^2+\lambda \mathcal{B}^2\la \nabla h\ra^2 \\
&\le  \Lambda \la \nabla \zeta\ra^2+\lambda \frac{\la \nabla h\ra^2}{(1+|\nabla h|^2)^2}(G(h)\zeta+\nabla \zeta \cdot \nabla h)^2\\
&\le  \Lambda \la \nabla \zeta\ra^2+\lambda^2 \frac{\la \nabla h\ra^2}{(1+|\nabla h|^2)^2}(G(h)\zeta)^2
+\lambda\Lambda \frac{\la \nabla h\ra^4}{(1+|\nabla h|^2)^2}\la \nabla \zeta \ra^2.
\end{align*}
Hence, it follows from \e{d10-bisb} that we have an estimate of the form:
$$
\int_{\xT^d} \gamma (G(h)\zeta)^2\dx \le   
\int_{\xT^d} \delta \la \nabla \zeta\ra^2\dx,
$$
where
\begin{align*}
\gamma\defn 1-\lambda^2 \frac{\la \nabla h\ra^2}{1+|\nabla h|^2},\qquad 
\delta \defn (1+|\nabla h|^2)\left(\Lambda+\lambda\Lambda \frac{\la \nabla h\ra^4}{(1+|\nabla h|^2)^2}\right).
\end{align*}

Then, we notice that
\begin{align*}
\delta&\le (1+|\nabla h|^2)(\Lambda+\lambda\Lambda)\le (1+|\nabla h|^2)\left(4+\frac{2}{\eps}\right)
\le 20(1+|\nabla h|^2)^2.
\end{align*}
On the other hand, we have
$$
\gamma =1-\lambda^2 \frac{\la \nabla h\ra^2}{1+|\nabla h|^2}=\frac{1-(2\eps+\eps^2)\la\nabla h\ra^2}{1+|\nabla h|^2}
\ge \mez\cdot \frac{1}{1+|\nabla h|^2},
$$
where we used the pointwise inequality $(2\eps+\eps^2)\la\nabla h\ra^2\le 3\eps |\nabla h|^2\le 1/2$. It follows that
$$
\mez \int_{\xT^d} \frac{(G(h)\zeta)^2}{1+|\nabla h|^2}\dx \le   
 \int_{\xT^d} 20(1+|\nabla h|^2)^2 |\nabla \zeta|^2 \dx.
$$
This implies the wanted result~\e{d10-bisd} and hence concludes the proof of the theorem.

\smallbreak

{\em Step 4: The general case.}
We now assume only that $h\in C^{1}(\xT^d)$ and 
$\zeta\in H^1(\xT^d)$. 

Introduce two sequences of smooth functions $\{h_n\}_{n\in\xN}$ 
and $\{\zeta_n\}_{n\in\xN}$ such that $\lA h_n-h\rA_{W^{1, \infty}}$ 
and $\lA \zeta_n-\zeta\rA_{H^1}$ converge to $0$ when $n$ goes to $+\infty$. 
Then it follows from variational arguments (see \cite[Section 3]{ABZ3}) 
that $G(h_n)\zeta_n$ converges to $G(h)\zeta$ in $H^{-1/2}(\xT^d)$. 

On the other hand, it follows from \e{d10-bisd} applied with $(h,\zeta)$ 
replaced by $(h_n,\zeta_n)$ that the sequence $\{G(h_n)\zeta_n\}_{n\in\xN}$ 
is bounded in $L^2(\xT^d)$, indeed
$$
\int_{\xT^d} (G(h_n)\zeta_n)^2\dx \le   
40\,  (1+\lA \nabla h_n\rA_{L^\infty}^2)^3
\int_{\xT^d} |\nabla \zeta_n|^2 \dx.
$$
It follows that there exists a subsequence $\{G(h_{n'})\zeta_{n'}\}$
converging weakly in $L^2(\xT^d)$. Therefore, by uniqueness 
of the limit in the space of distributions, we see that $G(h)\zeta$ 
belongs to $L^2(\xT^d)$. Given~\e{Rellich:end}, this in turn implies that 
$(\partial_N\phi)\arrowvert_{y=h}$ and $(\nabla_{x,y}\phi)\arrowvert_{y=h}$ 
are well defined and belong to $L^2(\xT^d)$. 

It remains to prove the estimates. 
Notice that 
$(G(h_n)\zeta_n)/\sqrt{1+|\nabla h_n|^2}$ converges weakly in $L^2$ 
to $G(h)\zeta/\sqrt{1+|\nabla h|^2}$. Therefore, 
the $L^2$-norm of the latter is bounded by 
$$
\liminf \big\Vert (G(h_n)\zeta_n)/\sqrt{1+|\nabla h_n|^2}\big\Vert_{L^2}.
$$
This establishes 
the estimate \e{d10-bisd}. Using again~\e{Rellich:end}, this in turn implies 
the estimate \e{d10-bise-intro} which completes the proof.

\subsection{Proof of \thmref{thm:Rellich1D}}

We will do the computations for smooth $h$ and $\zeta$. We can then extend the estimates to $h\in C^{1}(\xT)$ and $\zeta\in H^1(\xT)$ by the same logic as in the proof of \thmref{G-Lip-intro}. 

We know from the proof of \propref{prop-Rellich} that the quantity $R$ defined in \eqref{eq:R} is zero, i.e.
\begin{align*}
\int_{\xT} \brac{\Bcal^2 - \Vcal^2 - 2 h_x \Bcal\Vcal } \dx = 0.
\end{align*}
Now as we are in one dimension, the equations \eqref{defi:BV2} simplify
\begin{align*}
\Bcal & = \frac{h_x}{1 + h_x^2} \zeta_x +  \frac{1}{1 + h_x^2}G(h)\zeta, \\
\Vcal & = \frac{1}{1 + h_x^2}\zeta_x - \frac{h_x}{1 + h_x^2}G(h)\zeta.
\end{align*}
Substituting it in the above formula and simplifying we get
\begin{align}\label{eq:Rellich1D}
\int_{\xT} \cbrac{-\frac{\zeta_x^2}{1 + h_x^2} + \frac{(G(h)\zeta)^2}{1 + h_x^2} + \frac{2h_x\zeta_x G(h)\zeta }{1 + h_x^2}} \dx = 0.
\end{align}
Now using Young's inequality $ab \leq \frac{a^2}{2} + \frac{b^2}{2}$ gives
\begin{align*}
\int_{\xT}  \frac{(G(h)\zeta)^2}{1 + h_x^2}\dx & \leq \int_{\xT} \frac{\zeta_x^2}{1 + h_x^2}\dx + \frac{1}{2}\int_{\xT}  \frac{(G(h)\zeta)^2}{1 + h_x^2}\dx + \frac{1}{2}\int_{\xT} \frac{4h_x^2\zeta_x^2}{1 + h_x^2}\dx \\
& \leq \frac{1}{2}\int_{\xT}  \frac{(G(h)\zeta)^2}{1 + h_x^2}\dx + \int_{\xT} \frac{(1 + 2h_x^2)\zeta_x^2}{1 + h_x^2}\dx \\
& \leq \frac{1}{2}\int_{\xT}  \frac{(G(h)\zeta)^2}{1 + h_x^2}\dx +  2\int_{\xT}  \abs{\zeta_x}^2 \dx.
\end{align*} 
The estimate \eqref{eq:Ghzetafromzetax} now follows. The proof of \eqref{eq:zetaxfromGhzeta} follows the same logic.

\section{Riemann mapping and Rellich estimates}\label{S:3}
In this section, we prove~\thmref{thm:Riemest}. 

We will do the computations for smooth $h$ and $\zeta$. We can then extend the estimates to $h\in C^{1}(\xT)$ and $\zeta\in H^1(\xT)$ by the same logic as in the proof of \thmref{G-Lip-intro}. 

 Note that the estimate \eqref{eq:Ghzetafromzetax}, which reads
\begin{align*}
\int_{\xT} \frac{(G(h)\zeta)^2 }{1 + h_x^2} \dx \leq 4  \int_{\xT}  \zeta_x^2 \dx
\end{align*}
can be rewritten as 
\begin{align}\label{eq:Rellicharclength}
\int_{\partial \Omega} \frac{(\partial_N \phi)^2 }{(1 + h_x^2)^\half} \dsigma \leq 4 \int_{\partial \Omega}  (\partial_T \phi)^2 (1 + h_x^2)^\half \dsigma,
\end{align}
which is the wanted estimate~\e{w1} for $p=2$. 
We will deduce that \e{w1} holds for $1<p<2$ by an interpolation argument. To do so, 
we will exploit the existence of a Riemann mapping to reduce the problem to the study of harmonic functions in a half-space.

We first consider the $2\pi$ periodic version of $\Omega$ by considering the domain $\widetilde{\Omega} = \cbrac{(x,y) \in {\xR}^2 \suchthat \exists n \in \Zsp \tx{ so that } (x - 2n\pi, y) \in \Omega }$.  Let $\Pminus = \cbrac{ (x,y) \in {\xR}^2 \suchthat y<0}$ be the lower half plane and let $\Psi:\Pminus \to \widetilde{\Omega}$ be a Riemann mapping. As the boundary $\partial \widetilde{\Omega}$ is a Jordan curve, by Carathéodory's theorem the map $\Psi$ extends continuously to a homeomorphism on the boundary. Let $Z$ be the boundary value of $\Psi$ and so $\Z: {\xR} \to \partial \widetilde{\Omega}$ is a homeomorphism. We will denote the coordinates on this $\xR$ by $\al$ so we will use quantities like $\Z(\al), \partial_\al$ etc.

Now as $\Psi$ is a Riemann map from $\Pminus \to \widetilde{\Omega}$, we see that $z \mapsto \Psi(k(z-c))$ for $k>0$ and $c \in \xR$ are all the Riemann maps from $\Pminus \to \widetilde{\Omega}$. Therefore without loss of generality we may assume that $Z(0) = (0, h(0))$ and $Z(2\pi) = (2\pi, h(2\pi)) = Z(0) + 2\pi$. Now consider $\Psi_1: \Pminus \to  \widetilde{\Omega}$ given by $\Psi_1(z) = \Psi(z + 2\pi) - 2\pi$. Clearly $\Psi_1$ is a Riemann map with $\Psi_1(0) = \Psi(0)$ and so there exists $k>0$ so that $\Psi_1(z) = \Psi(z + 2\pi) - 2\pi = \Psi(k z)$. If $k \neq 1$, then we get a contradiction by plugging in $z = \frac{2\pi}{k-1}$ in this equation. Hence $\Psi_1 = \Psi$ and therefore $\Psi(z + 2\pi) = \Psi(z) + 2\pi$. 

As $\Psi$ is a Riemann map, we see that $\Psi_z \neq 0$ in $\Pminus$ and as $\Pminus$ is simply connected, we see that $\log(\Psi_z)$ is well defined if we fix the value of $\log(\Psi_z(-i))$ (the choice one makes is immaterial).  Now the smoothness of the domain $\widetilde{\Omega}$ implies that $\log(\Psi_z)$ extends continuous to $\Pminusbar$ (see Theorem 3.5 in \cite{Pom92}. The proof given there is for the unit disc but the same proof also works for the half plane). In particular this means that there exists $c_1, c_2 >0$ such that $ c_1 \leq \abs{\Zal(\al)} \leq c_2$ for all $\al \in \xR$. Now we define $g : \xR \to \xR$ by
\begin{align}\label{eq:logZal}
g = \Imag(\log(\Zal)).
\end{align}
Notice that $g$ is $2\pi$ periodic. 

As the slope of the interface is bounded, we can define $\theta(x) = \arctan(h_x(x))$, where $\theta$ is now the angle the interface makes with the $x$-axis. Hence we see that 
\begin{align*}
e^{i\theta(\Real(\Z(\al)))} = e^{ig(\al)}.
\end{align*} 
Therefore $1 + h_x(\Real(\Z(\al)))^2 = 1 + \tan(g(\al))^2$. We also note that $ \tan(g)$ is a bounded function. 

Now let $\widetilde{\phi}\colon \Pminus \to \xR$ be the pullback of $\phi$, given by 
\begin{align*}
\widetilde{\phi}(z) = \phi(\Psi(z)),
\end{align*}
with its boundary value being $\widetilde{\zeta}$, i.e. $\widetilde{\phi}(\al) = \widetilde{\zeta}(\al) = \zeta(\Z(\al))$. As $\Psi$ is conformal, we see that $\widetilde{\phi}$ is also a harmonic function and on the boundary we have
\begin{align*}
 (\partial_T\phi)(\Z(\al)) = \frac{1}{\Zalabs}(\partial_\al \widetilde{\phi})(\al) = \frac{1}{\Zalabs}(\pal \widetilde{\zeta})(\al) .
\end{align*}
If $n$ is the unit outward normal of $\Pminus$, then we also see that 
\begin{align*}
(\partial_N\phi)(\Z(\al)) = \frac{1}{\Zalabs}(\partial_n \widetilde{\phi})(\al) = \frac{1}{\Zalabs}(\Dabs \widetilde{\zeta})(\al)
\end{align*}
where $\Dabs = \sqrt{-\Delta}$. We can also see that the pullback of the measure $\dsigma$ on $\partial {\Omega}$  is the measure $\Zalabs\dalpha$ on $\xT$. Hence \eqref{eq:Rellicharclength} is equivalent to 
\begin{align}\label{eq:Rellichconformal}
\int_{\xT} \frac{\abs*[\big]{\Dabs\zetatil}^2}{\Zalabs (1 + \tan^2(g))^\half} \dalpha \leq 4   \int_{\xT} \frac{\abs*[\big]{\partial_\al\zetatil}^2(1 + \tan^2(g))^\half}{\Zalabs} \dalpha.
\end{align}
If $\Fcal(f)$ is the Fourier transform of $f$, then the periodic Hilbert transform  $\Hil : \Ltwo(\xT) \to  \Ltwo(\xT)$ is given by the relation
\begin{align}\label{eq:Hilbddnew}
\Fcal(\Hil f)(n) = -i sgn(n) \Fcal(f)(n)  \qq \tx{ for } n \in \Zsp,
\end{align}
where $sgn(n) = 1$ if $n>0$, $sgn(n) = -1$ if $n<0$ and $sgn(0) = 0$. Hence
$$
\abs*{\Dabs \zetatil} = \abs*{\Hil \partial_\alpha \zetatil}.
$$
Therefore we see that \eqref{eq:Rellichconformal} is equivalent to the statement that the map  $\Hil: L^2\brac{\xT, v \dalpha} \to L^2\brac{\xT,u \dalpha}$ is bounded, where 
the weights $u$ and $v$ are defined by
$$
u = \frac{(1 + \tan^2(g))^{-\half}}{\Zalabs}\quad\text{and}\quad 
v = \frac{(1 + \tan^2(g))^\half}{\Zalabs}\cdot
$$
Note that there exists constants $c_3, c_4 >0$ such that $c_3 \leq u,v \leq c_4$ on all of $\xT$ due to the properties of $\tan(g)$ and $\Zal$ mentioned above. 

Now we know that $\Hil: L^1\brac{\xT,  \dalpha} \to  L^{1,\infty}\brac{\xT, \dalpha}$ is bounded, where we recall that $ f \in L^{1,\infty} $ if we have $\norm[1,\infty]{f} = \sup_{t>0} t\abs{\cbrac{x \in \xT \suchthat \abs{f(x)} > t }} < \infty$ (see Corollary 3.16 in \cite{MusSch13}). Hence by real interpolation of operators with change of measures (namely, by using Theorem 2.9 from \cite{SteinWeiss58} with $T= \Hil$, $p_0 = q_0 = 1$, $p_1 = q_1 = 2$, $M = N = \xT$, $\diff \mu_0 = \diff \nu_0 = \diff \al$, $\diff \mu_1 = v\diff \al$ and $\diff \nu_1 = u\diff \al$) we see that, for all 
$1<p<2$,
\begin{align}\label{eq:HbddLp}
\Hil: L^p\brac{\xT, v^{p-1} \dalpha} \to L^p\brac{\xT,u^{p-1} \dalpha} \quad \tx{ is bounded.}
\end{align}
Therefore for $1<p<2$, there exists a constant $C_p>0$ such that
\begin{align*}
\int_{\xT} \frac{\abs*[\big]{\Dabs\zetatil}^p}{\Zalabs^{p-1} (1 + \tan^2(g))^{\frac{p-1}{2}}} \dalpha \leq C_p   \int_{\xT} \frac{\abs*[\big]{\partial_\al\zetatil}^p(1 + \tan^2(g))^{\frac{p-1}{2}}}{\Zalabs^{p-1}} \dalpha,
\end{align*}
which is equivalent to 
\begin{align*}
\int_{\partial \Omega} \frac{\abs{\partial_N \phi}^p }{(1 + h_x^2)^{\frac{p-1}{2}}} \dsigma \leq C_p \int_{\partial \Omega}  \abs{\partial_T \phi}^p (1 + h_x^2)^{\frac{p-1}{2}} \dsigma,
\end{align*}
proving the first statement. The other statement also follows directly as \eqref{eq:HbddLp} applied on the function $\Dabs \zetatil$ instead gets us
\begin{align*}
\int_{\xT} \frac{\abs*[\big]{\partial_\alpha\zetatil}^p}{\Zalabs^{p-1} (1 + \tan^2(g))^{\frac{p-1}{2}}} \dalpha \leq C_p   \int_{\xT} \frac{\abs*[\big]{\Dabs\zetatil}^p(1 + \tan^2(g))^{\frac{p-1}{2}}}{\Zalabs^{p-1}} \dalpha,
\end{align*}
which is equivalent to 
\begin{align*}
\int_{\partial \Omega} \frac{\abs{\partial_T \phi}^p }{(1 + h_x^2)^{\frac{p-1}{2}}} \dsigma \leq C_p \int_{\partial \Omega}  \abs{\partial_N \phi}^p (1 + h_x^2)^{\frac{p-1}{2}} \dsigma.
\end{align*}
This completes the proof.

\bigbreak

\noindent\textbf{Acknowledgements.} The authors deeply acknowledge Didier Bresch and David Lannes for several stimulating  discussions. This material is based upon a work started while the authors participated in a program hosted by the Mathematical Sciences Research Institute in Berkeley, California, during the Fall 2021 semester, supported by the National Science Foundation under Grant No. DMS-1928930. S.A. received funding from the European Research Council (ERC) under the European Union’s Horizon 2020 research and innovation program through the grant agreement 862342.  T.A. also acknowledges the SingFlows project (grant ANR-18-CE40-0027) of the French National Research Agency (ANR).

\bibliographystyle{amsplain}
\bibliography{MainRef.bib}

\vfill
\begin{flushleft}
\vspace{1cm}

\textbf{Siddhant Agrawal}\\
Instituto de Ciencias Matem\'aticas (ICMAT),\\
C/ Nicol\'as Cabrera, 13-15 (Campus Cantoblanco)\\ 
28049 Madrid \\
Spain

\vspace{1cm}

\textbf{Thomas Alazard}\\
Universit{\'e} Paris-Saclay, ENS Paris-Saclay, CNRS,\\
Centre Borelli UMR9010, avenue des Sciences, \\
F-91190 Gif-sur-Yvette\\
France

\end{flushleft}

\end{document}